\newcommand{\isomto}{\overset{{}_\sim}{\rightarrow}}
 \def\Z{{\mathbf{Z}}} \def\1{{\mathbf{1}}}
\def\F{{\mathcal{F}}} \def\G{{\mathbf{G}}} 
\def\C{{\mathcal{C}}} \def\M{{\mathcal{M}}}
\def\I{{\mathcal{I}}}
\def\HH{{\mathbb{H}}}
\def\H{{\mathrm{H}}}
\def\PP{{\mathbf{P}}}
\def\AA{{\mathbf{A}}}
\def\O{{\mathcal{O}}}
\def\OXz{{\O_{X,z}^{{}^\wedge}}}
\def\Hom{{\rm Hom}}
\def\Ext{{\rm Ext}}
\def\id{{\rm id}}
\DeclareMathOperator{\coker}{coker}
\DeclareMathOperator{\Pic}{Pic}
\DeclareMathOperator{\End}{End}
\DeclareMathOperator{\Spec}{Spec}
\def\pr{{\rm pr}}
\def\phi{{\varphi}}
\theoremstyle{plain}
\newtheorem{theorem}{Theorem}
\newtheorem{corollary}{Corollary}
\newtheorem{lemma}{Lemma}
\newtheorem{proposition}{Proposition}
\theoremstyle{definition}
\newtheorem{definition}{Definition}
\newtheorem{remark}{Remark}
\begin{document}

\title[The Carlitz shtuka]
 {The Carlitz shtuka}

\author{Lenny Taelman}

\begin{abstract}
Recently we have used the Carlitz exponential map to define a finitely generated submodule of the Carlitz module having the right properties to be a function field analogue of the group of units in a number field. Similarly, we constructed a finite module analogous to the class group of a number field.

In this short note more algebraic constructions of these ``unit'' and ``class'' modules are given and they are related to  $\Ext$ modules in the category of shtukas.
\end{abstract}

\maketitle


\section{Introduction and statement of the main results}

\subsection{Notation}

Let $k$ be a finite field of $q$ elements. Without mention to the contrary schemes are
understood to be over $ \Spec k $ and tensor products over
$ k $.  

Let $ t $ be the standard coordinate on the projective line $\PP^1$ over $k$,
let $ F=k(t) $ the function field of $ \PP^1/k $ and let $ A =k[t] $ the ring of functions regular 
away from the ``point at infinity'' $\infty \in \PP^1$. 

Let $ X $ be a smooth projective geometrically connected curve over $ k $ and
$ X \to \PP^1 $ a surjective map. Denote the function field of $X$ by $K$. Let
$ Y \subset X $ be the inverse image of $ \Spec A = \PP^1 -\infty $.

\subsection{The Carlitz module}

\begin{definition}\label{defCarlitz}
The Carlitz module is the functor
\[
	C_0\colon \{ \Spec A\text{-schemes}\} \to \{A\text{-modules}\}
\] 
which associates to a scheme $S$ over $ \Spec A $ the $ A $-module $C(S)$
given by $C(S)= \Gamma(S, \O_S) $ as a $k$-vector space, with $A$-module structure 
\[
	\phi\colon A \to \End_k \Gamma(S, \O_S) \colon
	t \mapsto \left( c \mapsto tc + c^q \right).
\]
\end{definition}

The functor $ C_0 $ is in many ways an analogue of the multiplicative group
\[
	\G_m\colon \{\Spec \Z \text{-schemes}\} \to \{\Z\text{-modules}\}\colon 
		S \mapsto \Gamma(S,\O_S)^\times.
\]
Yet, in contrast with Dirichlet's unit theorem we have the following negative result:

\begin{proposition}[Poonen \cite{Poonen97}]
The $ A $-module $ C_0( Y ) $ is not finitely generated.\qed
\end{proposition}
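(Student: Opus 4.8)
The plan is to contrast two facts: in the Carlitz structure the operator $\phi_t$ stretches pole orders at the places above $\infty$ by a factor of $q$, so the cokernel of $\phi_t\colon C_0(Y)\to C_0(Y)$ is infinite-dimensional over $k$; whereas if $C_0(Y)$ were a finitely generated $A$-module, that cokernel $C_0(Y)/t\,C_0(Y)$ — a module over $A/(t)=k$ — would be finite-dimensional, giving a contradiction. Write $B=C_0(Y)=\Gamma(Y,\O_Y)$. Since $X\to\PP^1$ is a finite surjection, the fibre over $\infty$ is a nonempty finite set $\{\infty_1,\dots,\infty_r\}$, with valuations $v_i$ and ramification indices $e_i$ (so $v_i(t)=-e_i$); I set $\delta=\sum_i\deg\infty_i\ge 1$ and $e_{\max}=\max_i e_i$.

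First I would introduce the pole filtration $V_N=\{c\in B: v_i(c)\ge -N\text{ for all }i\}=\H^0(X,\O_X(N\textstyle\sum_i\infty_i))$, so that $B=\bigcup_N V_N$. The Riemann inequality gives $\dim_k V_N\ge \delta N+1-g$, where $g$ is the genus of $X$, so $\dim_k V_N$ grows at least linearly in $N$ with positive slope $\delta$; similarly $\dim_k V_N\le \delta N+1$.

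Second comes the key local estimate. For nonzero $c\in B$ put $n=\max_i(-v_i(c))$, attained at some $\infty_{i_0}$. Since $v_{i_0}(c^q)=-qn$ while $v_{i_0}(tc)=-(e_{i_0}+n)$, as soon as $(q-1)n>e_{i_0}$ the term $c^q$ strictly dominates $tc$ at $\infty_{i_0}$ and no cancellation can occur, so $\phi_t(c)=tc+c^q$ again has pole order exactly $qn$ there. Hence $\phi_t^{-1}(V_N)\subseteq V_{N'}$ with $N'=\max(N/q,\,e_{\max}/(q-1))$, and therefore $\phi_t(B)\cap V_N\subseteq \phi_t(V_{N'})$ has $k$-dimension at most $\dim_k V_{N'}\le \delta N/q+O(1)$ for $N$ large.

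Combining the two estimates, $\dim_k\bigl(V_N/(V_N\cap\phi_t(B))\bigr)\ge \delta(1-q^{-1})N-O(1)\to\infty$, and since each such quotient injects into $B/\phi_t(B)$ the cokernel of $\phi_t$ is infinite-dimensional over $k$. But in the Carlitz structure $\phi_t(B)=tB$, so $B/\phi_t(B)=B/tB$; a finite generating set for $B$ over $A$ would map to a $k$-spanning set of $B/tB$, forcing $\dim_k B/tB<\infty$. This contradiction shows $C_0(Y)$ is not finitely generated. The one delicate point — and the structural reason the Carlitz module behaves unlike $\G_m$ — is the expansion estimate: one must ensure the Frobenius term $c^q$ dominates $tc$, which is exactly where $q\ge 2$ enters; the finitely many exceptional elements with $(q-1)n\le e_{\max}$ span a fixed finite-dimensional space and are harmlessly absorbed into the $O(1)$.
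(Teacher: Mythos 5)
Your proof is correct, and it is genuinely different from what the paper does: the paper offers no argument at all for this proposition, quoting it as a theorem of Poonen \cite{Poonen97}, whose result is in fact stronger and more general --- for an arbitrary Drinfeld module over the ring of $S$-integers of a global function field, the module of integral points decomposes as (finite torsion) $\oplus$ (free of countably infinite rank), proved via a theory of local height functions. Your argument is an elementary, self-contained valuation count tailored to the Carlitz module: since $\phi_t(c)=tc+c^q$ and $v_{i_0}(c^q)=-qn$ strictly dominates $v_{i_0}(tc)=-(e_{i_0}+n)$ once $(q-1)n>e_{i_0}$, the operator $\phi_t$ essentially multiplies maximal pole orders by $q$, so $\phi_t(B)\cap V_N\subseteq\phi_t(V_{N'})$ with $N'=\max(N/q,\,e_{\max}/(q-1))$; combined with the two-sided Riemann--Roch bounds $\delta N+1-g\le\dim_k V_N\le\delta N+1$ this forces $\dim_k V_N/(V_N\cap\phi_t(B))\ge\delta(1-q^{-1})N-O(1)$, whence $B/\phi_t(B)$ is infinite-dimensional over $k=A/(t)$, contradicting finite generation. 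I verified the steps: the identification $V_N=\H^0\bigl(X,\O_X(N\textstyle\sum_i\infty_i)\bigr)$ is right (nonzero elements of $B$ with no pole at infinity are constants, so $n\ge 0$ always); the ultrametric ``no cancellation'' claim is the standard fact that $v(x+y)=\min(v(x),v(y))$ when the valuations differ; and your argument wisely needs only the upper bound $\dim_k\phi_t(V_{N'})\le\dim_k V_{N'}$, so the possible failure of injectivity of $\phi_t$ (the Carlitz $t$-torsion $c^{q-1}=-t$ can lie in $B$ for suitable $K$) is harmless. What each approach buys: Poonen's machinery yields the full structure theorem for all Drinfeld modules; yours proves only non-finite-generation but does so in a page, actually establishes the stronger statement that the cokernel of $\phi_t$ on $C_0(Y)$ is infinite-dimensional over $k$, and would extend with no change to any Drinfeld module, since only the domination of the top Frobenius term of $\phi_t$ is used. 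One cosmetic point: $N/q$ need not be an integer, so $V_{N'}$ should be read as $V_{\lfloor N'\rfloor}$, which your $O(1)$ already absorbs.
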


\subsection{A construction using the Carlitz exponential}
In \cite{Taelman10} we have used the Carlitz exponential map to cut out a canonical finitely generated sub-$A$-module from $ C_0(Y) $. We recall and reformulate this construction.

A simple recursion shows that there is a unique power series $\exp x$ in $ F[[x]] $
which is of the form
\[
	\exp x = x + e_1x^q + e_2x^{q^2} + \cdots  
\]
and which satisfies
\begin{equation}\label{expfuneq}
	\exp (tx) = t\exp x + (\exp x)^q.
\end{equation}
This power series is called the \emph{Carlitz exponential}. It is entire and for every point $z$ of $X\backslash Y$ it defines an $A$-linear map $ \exp\colon K_z \to C_0( K_z ) $.

Note that $ U \mapsto C_0(U) $ defines a (Zariski) sheaf of $ A $-modules on $ Y $. We extend
it to a sheaf $C$ on $ X$ as follows:
\[
	C( U ) := 
	\left\{ ( c, ( \gamma_z )_z )
		\in C_0( U\cap Y ) \times \!\!\prod_{z\in U\backslash Y}\! K_z 
		\,\,\big|\,\,  \forall z\,  \exp \gamma_z = c \right\}.
\]
One verifies easily that this indeed defines a sheaf on $ X $. The main result of \cite{Taelman10} can be restated as follows:

\begin{proposition} \label{finiteness} $~$
\begin{enumerate}
\item $ \H^0( X, C ) $ is a finitely generated $ A $-module;
\item $ \H^1( X, C ) $ is a finite $ A $-module.
\end{enumerate}
\end{proposition}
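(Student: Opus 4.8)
The plan is to relate the Zariski cohomology of $C$ on the curve $X$ to the behaviour of the Carlitz exponential on the complete local fields above $\infty$, and then to finish with a lattice-theoretic argument. Write $B=\Gamma(Y,\O_Y)$ for the ring of functions regular on $Y$; it is a finitely generated $A$-module of rank $[K:F]$. Let $F_\infty$ be the completion of $F$ at $\infty$ and set $K_\infty=\bigoplus_{z\in X\backslash Y}K_z$, a finite-dimensional $F_\infty$-vector space, with $\exp=\bigoplus_z\exp\colon K_\infty\to K_\infty$ the Carlitz exponential applied place by place. The expansion $\exp x=x+e_1x^q+e_2x^{q^2}+\cdots$ shows that $\exp$ is the identity to first order, so that near $0$ it is a bijection onto a neighbourhood of $0$; combined with entireness this makes $\exp$ a continuous, open, $A$-linear endomorphism of $K_\infty$ with discrete kernel $\Lambda=\bigoplus_z\Lambda_z$, a finitely generated $A$-module of rank $\leq\#(X\backslash Y)$. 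I record two further inputs: the image $\exp(K_\infty)$ is an open, hence closed, $A$-submodule, and the diagonal image of $B$ is a \emph{discrete, cocompact} $A$-lattice in $K_\infty$ (discreteness because regular functions of bounded order form a finite-dimensional $k$-space, cocompactness being the finiteness of $K_\infty/(B+\prod_z\O_{K_z})$, i.e.\ Riemann--Roch for $X$).

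Next I would express the cohomology of $C$ through these data. Let $j\colon Y\injto X$ be the inclusion and, for $z\in X\backslash Y$, let $i_z$ be the inclusion of $z$. Forgetting the logarithms $\gamma_z$ gives a morphism $C\to j_*C_0$ whose kernel is the skyscraper $\bigoplus_z(i_z)_*\Lambda_z$ and whose image $C'\subset j_*C_0$ consists of the sections whose germ at each $z$ lies in $\exp(K_z)$. This yields two short exact sequences of sheaves of $A$-modules,
\[
0\to\bigoplus_z(i_z)_*\Lambda_z\to C\to C'\to0,\qquad
0\to C'\to j_*C_0\to\bigoplus_z(i_z)_*\bigl(K_z/\exp K_z\bigr)\to0.
\]
Skyscrapers have no higher cohomology, and $j_*C_0=\varinjlim_n\O_X(n\cdot\infty)$ as a sheaf of $k$-vector spaces gives $\H^0(X,j_*C_0)=B$ and $\H^1(X,j_*C_0)=0$ by Serre vanishing; so the long exact sequences collapse to
\[
\H^0(X,C)\cong\exp^{-1}(B),\qquad
\H^1(X,C)\cong K_\infty/\bigl(B+\exp(K_\infty)\bigr),
\]
where $\exp^{-1}(B)=\{\gamma\in K_\infty:\exp\gamma\in B\}$ and $B\to K_\infty/\exp(K_\infty)$ is the germ map at $\infty$. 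The first isomorphism is in fact immediate: an element of $\H^0(X,C)$ is determined by its family $(\gamma_z)_z$, and such a family occurs precisely when $(\exp\gamma_z)_z$ lies in the image of $B$.

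It then remains to run the lattice argument. For (1): since $B$ is discrete and $\exp$ is a local isomorphism at $0$, the preimage $\exp^{-1}(B)$ meets a small neighbourhood of $0$ only in $\Lambda\cap(\text{that neighbourhood})=\{0\}$, so $\exp^{-1}(B)$ is discrete; and $\exp$ induces a continuous injection $K_\infty/\exp^{-1}(B)\injto K_\infty/B$ with open image $(B+\exp K_\infty)/B$, exhibiting $K_\infty/\exp^{-1}(B)$ as an open subgroup of the compact group $K_\infty/B$, hence compact. Thus $\exp^{-1}(B)$ is a discrete cocompact $A$-lattice in $K_\infty$, and as such is a finitely generated $A$-module. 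For (2): the quotient $K_\infty/\exp(K_\infty)$ is discrete because $\exp(K_\infty)$ is open, while the image of the cocompact lattice $B$ makes $K_\infty/(B+\exp K_\infty)$ compact; a group both discrete and compact is finite.

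The main obstacle is the analytic input at $\infty$: proving that $\exp$ is a genuine local isomorphism at the origin—so that $\exp(K_\infty)$ is open and $\Lambda$ is discrete—and that $B$ is cocompact in $K_\infty$. The former rests on the convergence estimate that already makes $\exp$ entire, now applied uniformly near $0$; the latter repackages the finiteness of $\H^1(X,\O_X)$. Once these two topological facts are secured, the key point is that $\exp^{-1}(B)$ is again a lattice, marrying the ``archimedean'' behaviour of $\exp$ at $\infty$ with the discreteness of global regular functions; everything else is formal.
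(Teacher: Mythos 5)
Your proof is correct, but it establishes strictly more than the paper does at this point: the paper's own argument (Lemma \ref{lemmaC} in \S\ref{comparison}) merely identifies $\H^0(X,C)$ and $\H^1(X,C)$ with $\ker\delta$ and $\coker\delta$ and then \emph{cites} Theorem \ref{oldthm} from \cite{Taelman10} for the finiteness, whereas you rederive that finiteness from scratch by the lattice argument (which is in substance the proof given in \cite{Taelman10}). Your sheaf-theoretic step is a mild variant of the paper's: instead of the single three-term exact sequence $0\to C\to i_\ast C_0\times\prod_z i_{z,\ast}K_z\to\prod_z i_{z,\ast}C_0(K_z)\to 0$, you factor $C\to j_\ast C_0$ through the image sheaf $C'$ and use two short exact sequences with skyscraper kernel and cokernel; both versions hinge on the same two inputs, namely $K_z=K+\exp K_z$ for each $z\in X\backslash Y$ (this is the surjectivity of $j_\ast C_0\to\bigoplus_z (i_z)_\ast(K_z/\exp K_z)$ on stalks, which you leave implicit — it follows from density of $K$ in $K_z$ together with $\exp K_z\supset t\OXz$ from Corollary \ref{corexpinv}) and the vanishing $\H^1(X,j_\ast C_0)=\H^1(Y,C_0)=0$, which you obtain via $j_\ast C_0=\varinjlim_n \O_X(n\infty)$ rather than via affineness of $Y$. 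One step in your endgame should be phrased more carefully: a continuous injective homomorphism with compact image does not by itself make the source compact, so to see that $K_\infty/\exp^{-1}(B)$ really sits in $K_\infty/B$ as an open (hence compact) subgroup you need the induced map to be open as well as injective; this does hold, precisely because you have already shown $\exp$ is open (by additivity, $\exp(x+t^m\OXz)=\exp x+t^m\OXz$ for all $m\leq 1$ by Corollary \ref{corexpinv}), but it should be invoked at that point rather than glossed as ``exhibiting''. Likewise your rank bound $\rk\Lambda\leq\#(X\backslash Y)$ uses that the kernel of the Carlitz exponential has rank one over the completed algebraic closure, not mere discreteness — though only discreteness is actually needed. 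As for what each approach buys: the paper's route is shorter and modular, outsourcing all analysis to \emph{loc.\ cit.}; yours makes the statement self-contained and displays exactly where the two analytic inputs enter — openness of $\exp$ at $\infty$ (Corollary \ref{corexpinv}) and discreteness-plus-cocompactness of $B=\Gamma(Y,\O_Y)$ in $K_\infty$ — at the cost of invoking the standard lemma that a discrete $A$-submodule of a finite-dimensional $F_\infty$-vector space is finitely generated.
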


In \S \ref{comparison} we will show how to deduce this result from \cite{Taelman10}.

In particular, the image of the restriction map $ C(X) \to C(Y) = C_0(Y) $ is a canonical
finitely generated submodule of $ C_0(Y) $, it is a Carlitz analogue of the group of units
in a number field. Similarly, $ \H^1( X, C ) $ is a Carlitz analogue of the class group of
a number field (this should of course be compared with the isomorphism
$\H^1( \O_L, \G_m ) = \Pic \O_L$).

We \emph{do} need to pass to the completed curve $X$ to get something interesting: By Poonen's theorem $\H^0( Y, C_0 ) $ is not finitely generated, and since $ C_0 \cong \O_Y $ as sheaves of abelian groups we have that $ \H^1( Y, C_0 ) = 0 $.

Unfortunately the above definition of the sheaf $C$ is analytic in nature, and it would be desirable to have a purely algebraic description of $C$. The aim of this paper is to provide such a description, as well as a more ``motivic'' interpretation of it. 

\subsection{An algebraic description of the sheaf $ C $}
For an integer $n$, denote by $ \O_{\PP^1}(n\infty) $ the sheaf of functions on $ \PP^1 $ that have a pole of order at most $n$ at $\infty$ and by $ \O_X(n\infty) $ its pullback over $X\to \PP^1$. 

\begin{theorem}\label{thm1}
There is a short exact sequence of sheaves of $A$-modules on $ X $
\begin{equation}\label{thm1seq}
	0 \longrightarrow \O_X \otimes A \overset{\partial}{\longrightarrow}
	\O_X(\infty) \otimes A \longrightarrow C \longrightarrow 0
\end{equation}
where
\[
	\partial\colon f\otimes a \mapsto  f\otimes ta - (tf+f^q) \otimes a.
\]
\end{theorem}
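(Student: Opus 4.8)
The plan is to check exactness of \eqref{thm1seq} stalk by stalk on $X$, using that all three sheaves coincide with a standard presentation of the Carlitz module away from the points above $\infty$, so that the only genuine work is local at the finite set $S := X\backslash Y$. Left-exactness is immediate and uniform: filter $\O_X\otimes A$ by degree in the coefficient variable $t$, and observe that for a local section $\xi = \sum_{n=0}^{N} f_n\otimes t^n$ with $f_N\neq 0$ the top-degree part of $\partial(\xi)$ is $f_N\otimes t^{N+1}$, since the term $(tf_n+f_n^q)\otimes t^n$ never raises the coefficient degree. Hence $\partial(\xi)\neq 0$ and $\partial$ is injective at every stalk, including the generic point where the twist by $(\infty)$ is invisible.

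Over $Y$ the sequence is nothing but the free presentation of $C_0$. There $\O_X(\infty)|_Y=\O_Y$ and $C|_Y=C_0$, and I would define $\beta\colon \O_Y\otimes A\to C_0$ by $f\otimes a\mapsto \phi_a(f)$, where $\phi_a$ is the Carlitz action of $a\in A$. Using the functional equation \eqref{expfuneq}, $\beta$ is $A$-linear (coefficient $A$ on the source, Carlitz $A$ on the target); it is surjective since $f\otimes 1\mapsto f$; and $\beta\partial=0$ because $\phi_{ta}=\phi_a\phi_t$. That $\ker\beta=\im\partial$ follows from the same leading-term filtration: the relation $f\otimes t^{n}\equiv \phi_t(f)\otimes t^{n-1}\pmod{\im\partial}$ lets one reduce every section to coefficient-degree $0$, i.e.\ to some $c\otimes 1$, and $\beta(c\otimes 1)=c$ vanishes only for $c=0$. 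This settles exactness of \eqref{thm1seq} on $Y$.

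The crux is the local analysis at a point $z\in S$ above $\infty$, where the single pole permitted by $\O_X(\infty)$ must account exactly for the analytic datum $\gamma_z$. The functional equation says precisely that $\exp$ intertwines multiplication by $t$ on $K_z$ with the Carlitz action $\phi_t$, so $\exp\colon K_z\to C_0(K_z)$ is $A$-linear and a germ of $C$ at $z$ is a pair $(c,\gamma_z)$ with $\exp\gamma_z=c$. I would build the comparison map at $z$ from the restriction $f\mapsto f|_{U\cap Y}$ in the first coordinate together with an inverse to $\exp$ in the second: the relation $f\otimes t\equiv (tf+f^q)\otimes 1$ encoded by $\partial$ is the algebraic shadow of \eqref{expfuneq}, and iterating it so as to lower the pole realises the Carlitz logarithm as a convergent limit, producing $\gamma_z\in K_z$ out of a section with a pole of bounded order. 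Exactness at $z$ then again reduces to the leading-term filtration, now combined with the behaviour of $\exp$ on the local field $K_z$.

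The main obstacle is this last, local step at $\infty$: one must show that the purely algebraic cokernel of $\partial$ near $z$ is carried isomorphically onto the stalk of $C$, and this is exactly where the analytic input of \cite{Taelman10} enters, namely the entireness and convergence of $\exp$ and the identification of its relevant kernel and image over $K_z$. Once the isomorphism $\coker(\partial)\cong C$ is established stalk-wise over $S$ and matched with $\beta$ over $Y$, the local descriptions glue into a global $A$-linear surjection $\O_X(\infty)\otimes A\to C$ with kernel $\im\partial$, which together with the global injectivity of $\partial$ yields the short exact sequence \eqref{thm1seq}.
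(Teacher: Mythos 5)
Your reductions away from $\infty$ are correct and agree with the paper's (where they are Proposition \ref{propaway}, dismissed as ``straightforward''): injectivity of $\partial$ by looking at the top degree in the coefficient variable, and the presentation of $C_0$ via $f\otimes a\mapsto\phi(a)(f)$ with the reduction $f\otimes t^n\equiv\phi(t)(f)\otimes t^{n-1}$. But the decisive step --- the local statement at a point $z\in X\backslash Y$ --- is exactly where your proposal stops being a proof. You name it ``the main obstacle'' and outsource it to ``the analytic input of \cite{Taelman10}'', but that input is not a citable black box: the paper proves it on the spot, and your sketch skips the one estimate that makes everything work. From \eqref{expfuneq} one gets $|e_i|=|t|^{-iq^i}$, hence $|\exp x - x|<|x|$ whenever $|x|<|t|^{q/(q-1)}$, hence (Corollary \ref{corexpinv}) that $\exp$ restricts to a $k$-linear bijection $t^m\OXz\to t^m\OXz$ for all $m\leq 1$. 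Your plan to ``iterate $f\otimes t\equiv(tf+f^q)\otimes 1$ to lower the pole'' and obtain the logarithm ``as a convergent limit \dots\ out of a section with a pole of bounded order'' is false for general bounded order: the logarithm has radius of convergence $|t|^{q/(q-1)}$, so the iteration converges on $t\OXz$ but already diverges on $t^2\OXz$. The sharp bound ``pole order at most one'', i.e.\ the twist by exactly $\O_X(\infty)$ and no more, \emph{is} the content of the theorem, and it is precisely the estimate you never verify. Moreover, once Corollary \ref{corexpinv} is available, exactness at $z$ is not another leading-term computation: the paper obtains it (Proposition \ref{propLie}) by conjugating the elementary exact sequence $0\to\OXz\otimes A\xrightarrow{1\otimes t-t\otimes 1}t\OXz\otimes A\xrightarrow{\mu}K_z\to 0$, $\mu(f\otimes a)=af$, by the isomorphism $\exp\otimes\,\id$, which turns $1\otimes t - t\otimes 1$ into $\partial$ and $\mu$ into $\lambda(f\otimes a)=a\log f$.

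There is a second, structural gap: the stalkwise framing does not produce the arrow $\O_X(\infty)\otimes A\to C$ whose exactness you propose to check. Exactness of a \emph{given} sequence of sheaves may be verified on stalks, but a morphism of sheaves cannot be assembled from stalk data alone, and for a small open $U\ni z$ the group $C(U)$ mixes $C_0(U\cap Y)$ with the fields $K_z$, so there is no useful ``stalk of $C$ at $z$'' seen by the local field computation in isolation. The paper's gluing device is the exact sequence $E(m)$: for $U$ affine, $0\to\O_X(m\infty)(U)\to\O_X(U\cap Y)\times t^m\prod_z\OXz\to\prod_z K_z\to 0$; applying $\partial\colon E(0)\otimes A\to E(1)\otimes A$ and using \eqref{away} and \eqref{extLie} \emph{simultaneously} on the quotient yields $(\coker\partial)(U)\isomto C(U)$ on affines, and these glue. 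Your proposal needs such an adelic-style presentation (or an equivalent argument) to combine the computation over $Y$ with the one at $z$ into a map on sections; ``matching $\beta$ over $Y$ with the comparison at $z$'' does not by itself define the map.
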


The proof of this theorem will be given in section \S \ref{sec1}.

\subsection{Interpretation in terms of shtukas}

The short exact sequence of Theorem \ref{thm1} can be reinterpreted in terms of shtukas. 

For any $k$-scheme $ S $ denote by $ S_A $ the base change of $ S $ to $ \Spec A $ and
by $ \tau_A\colon S_A \to S_A $ the base change of the $q$-th power Frobenius endomorphism 
$\tau\colon S \to S $.  

\begin{definition}
A (right) \emph{$A$-shtuka} on a $k$-scheme $ S $ is a diagram 
\[
	\M = \left[  
	\M \overset{\sigma}{\longrightarrow} \M' \overset{j}{\longleftarrow} \tau_A^\ast \M 
	\right]
\]
of quasi-coherent $ \O_{S_A} $-modules. 
\end{definition}

With the obvious notion of morphism, the shtukas on $ S $ form an $A$-linear
abelian category. In particular, if $ \M_1 $ and $\M_2$ are two shtukas on $ S $ then
the Yoneda extension groups $\Ext^i( \M_1, \M_2 ) $ are $ A $-modules.

We have a natural isomorphism of sheaves of $ \O_{X} $-modules
\[
	\tau^\ast \O_{X} \overset{{}_\sim}{\longrightarrow} \O_{X},
\]
and will identify source and target in what follows. 

If $\F$ is a coherent sheaf of $\O_X$-modules and $M$ an $A$-module we denote by
$\F \boxtimes M$ the coherent sheaf of $\O_{X\times \Spec(A)}$-modules
\[
	\F \boxtimes M = \pr_1^\ast \F \otimes_{\O_{X\times \Spec A }} \pr_2^\ast \tilde{M}
\]
where $\pr_1$ and $\pr_2$ denote the projections from $X\times \Spec{A}$ to $X$ and
$\Spec A$ respectively.

\begin{definition}
The \emph{unit shtuka} on $X$ is defined to be the shtuka
\[
	\1 = \left[
	\O_{X} \boxtimes A
	 \overset{1}{\longrightarrow}
	 \O_{X} \boxtimes A
	 \overset{1}{\longleftarrow}
	 \tau^\ast\O_{X} \boxtimes A
	\right].
\]
\end{definition}

\begin{definition}
We define the \emph{Carlitz shtuka} on $ X $ to be the sthuka
\[
	\C = \left[ 
		\O_{X} \boxtimes A
		\overset{ \sigma }{\longrightarrow}
		\O_{X}( \infty ) \boxtimes A
		\overset{ 1 }{ \longleftarrow }
		\tau^\ast\O_{X} \boxtimes A
	\right]
\]
with
\[
	\sigma = 1\otimes t - t\otimes 1.
\]
\end{definition}

The following is essentially a formal consequence of Theorem \ref{thm1}:

\begin{theorem} \label{thm2} There are natural isomorphisms
\[
	\Ext^i( \1, \C ) \overset{{}_\sim}{\longrightarrow} \H^{i-1}( X, C )
\]
for all $i$.
\end{theorem}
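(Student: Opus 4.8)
The plan is to compute the Yoneda groups $\Ext^i(\1,\C)$ by a Hom\dash complex in the abelian category of shtukas and to identify the outcome with $R\Gamma(X,C)[-1]$, whose $i$-th cohomology is $\H^{i-1}(X,C)$. Write $X_A=X\times\Spec A$. For shtukas $\M=[\M\overset{\sigma_\M}{\to}\M'\overset{j_\M}{\from}\tau^\ast\M]$ and $\N=[\N\to\N'\from\tau^\ast\N]$, unwinding the definition of a morphism shows that $\Hom(\M,\N)$ is the kernel of
\[
d\colon \Hom_{\O_{X_A}}(\M,\N)\oplus\Hom_{\O_{X_A}}(\M',\N') \longrightarrow \Hom_{\O_{X_A}}(\M,\N')\oplus\Hom_{\O_{X_A}}(\tau^\ast\M,\N'),
\]
$d(\phi,\phi')=(\phi'\sigma_\M-\sigma_\N\phi,\ \phi'j_\M-j_\N\tau^\ast\phi)$. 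I claim $R\Hom(\M,\N)$ is the mapping fibre of the same $d$ formed with the total derived functors $R\Hom_{\O_{X_A}}$. To see this, resolve $\N$ by an injective object of the shtuka category. The two forgetful functors $\N\mapsto\N$ and $\N\mapsto\N'$ each admit an exact left adjoint — for the second, $\mathcal{G}\mapsto[0\to\mathcal{G}\from 0]$; for the first, $\mathcal{G}\mapsto[\mathcal{G}\to\mathcal{G}\oplus\tau^\ast\mathcal{G}\from\tau^\ast\mathcal{G}]$, which is exact because $\tau$ is flat — and hence preserve injectives. So the components of an injective resolution of $\N$ are injective, thus $\Gamma$-acyclic, $\O_{X_A}$-modules, and the $R\Hom_{\O_{X_A}}$-terms above are computed by the naive $\Hom$'s into these components; the two\dash term complex then computes $R\Hom(\M,\N)$ as its mapping fibre.

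Now take $\M=\1$. Its three component sheaves are all $\O_{X_A}$ (via $\tau^\ast\O_X\cong\O_X$) and $\sigma_\1=j_\1=1$; moreover $R\Hom_{\O_{X_A}}(\O_{X_A},-)=R\Gamma(X_A,-)$. Thus $R\Hom(\1,\N)$ is the mapping fibre of
\[
R\Gamma(X_A,\N)\oplus R\Gamma(X_A,\N') \longrightarrow R\Gamma(X_A,\N')\oplus R\Gamma(X_A,\N'),\quad (\phi,\phi')\mapsto(\phi'-\sigma_\N\phi,\ \phi'-j_\N\can\phi),
\]
where $\can\colon\N\to\tau^\ast\N$ is the canonical $q$-linear map (so $j_\N\tau^\ast\phi$ becomes $j_\N\can\phi$ on sections). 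After the coordinate change $(u,w)\mapsto(u,w-u)$ on the target the second component becomes $(\sigma_\N-j_\N\can)\phi$ while the first stays $\phi'-\sigma_\N\phi$; the $\phi'$-summand and the first target summand form a contractible direct summand $R\Gamma(X_A,\N')\overset{\id}{\to}R\Gamma(X_A,\N')$, and cancelling it (Gaussian elimination) leaves, naturally in $\N$,
\[
R\Hom(\1,\N)\ \simeq\ R\Gamma\!\left(X_A,\ \bigl[\N\overset{\sigma_\N-j_\N\can}{\longrightarrow}\N'\bigr]\right),
\]
the two\dash term complex placed in degrees $0$ and $1$.

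Finally, evaluate at $\N=\C$. Then $(\sigma_\C-j_\C\can)(f\otimes a)=f\otimes ta-tf\otimes a-f^q\otimes a$, which is exactly the map $\partial$ of Theorem \ref{thm1}: the $\O$-linear part $1\otimes t-t\otimes 1$ comes from $\sigma_\C$, and the Frobenius term $f^q$ from $j_\C\circ\can$. Pushing the exact sequence \eqref{thm1seq} forward along the affine projection $\pr_1\colon X_A\to X$ exhibits $[\O_X\boxtimes A\overset{\partial}{\to}\O_X(\infty)\boxtimes A]$ as a two\dash term resolution of an $\O_{X_A}$-module $\tilde C$ with $\pr_{1\ast}\tilde C=C$ and $R^{>0}\pr_{1\ast}\tilde C=0$; hence this complex is quasi-isomorphic to $\tilde C[-1]$ and
\[
R\Hom(\1,\C)\ \simeq\ R\Gamma(X_A,\tilde C[-1])\ \simeq\ R\Gamma(X,C)[-1].
\]
Passing to cohomology yields the natural isomorphisms $\Ext^i(\1,\C)\isomto\H^{i-1}(X,C)$.

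I expect the crux to be the first step: justifying that the naive two\dash term Hom\dash complex computes $R\Hom$ in the shtuka category, which rests on the claim that the two component functors preserve injectives (equivalently, that the left adjoints above are exact). Granting this, the Gaussian elimination of the second step and the appeal to Theorem \ref{thm1} in the third are formal; the only remaining points are the naturality of all the identifications and the bookkeeping of the single cohomological shift, both of which are transparent from the construction.
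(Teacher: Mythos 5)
Your endgame is exactly the paper's: the paper first proves (Proposition \ref{proplaff}) that $\Ext^i(\1,\M)\cong\HH^i(S_A,\M^\bullet)$ for the two\dash term complex $\M^\bullet=[\M\overset{\partial}{\to}\M']$ with $\partial=\sigma-j\circ\tau$, and then concludes as you do, identifying $\partial$ for $\C$ with the map of Theorem \ref{thm1}, pushing forward along the affine projection $X_A\to X$, and reading off the shift; your Gaussian elimination is just an explicit form of this reduction, and your last two steps are fine (including the flatness of Frobenius that makes $L_1$ exact, which holds since $X$ is smooth over the perfect field $k$). The problem is the step you yourself flag as the crux. Preservation of injectives by the two component functors --- which your exact left adjoints do establish --- gives only half of what you need: it shows that for an injective resolution $\N\to\I^\bullet$ in the shtuka category the four terms $\Hom_{\O_{X_A}}(-,\I^n)$ and $\Hom_{\O_{X_A}}(-,\I'^n)$ compute the corresponding $R\Hom_{\O_{X_A}}$'s. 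It does \emph{not} show that the total complex of the mapping fibre of $d$ computes $R\Hom$ in the shtuka category: for that you must also know that the fibre of $d(\M,\I)$ has vanishing $H^1$ when $\I$ is an injective shtuka, i.e.\ that $d$ is then surjective. For $\M=\1$ this amounts to surjectivity of $\partial\colon\Gamma(X_A,\I)\to\Gamma(X_A,\I')$ for injective shtukas $\I$, which is not a formal consequence of componentwise injectivity, so as written the inference ``components injective, hence the fibre computes $R\Hom$'' has a genuine hole.

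The hole is fillable with tools you already introduced. Your two left adjoints $L_1\colon\F\mapsto[\F\to\F\oplus\tau^\ast\F\from\tau^\ast\F]$ and $L_2\colon\F\mapsto[0\to\F\from 0]$ furnish a canonical short exact sequence of shtukas
\[
0\longrightarrow L_2\bigl(\M\oplus\tau^\ast\M\bigr)\longrightarrow L_1(\M)\oplus L_2(\M')\longrightarrow\M\longrightarrow 0,
\]
where the surjection has components $\id_\M$ and $(\sigma_\M,j_\M,\id_{\M'})$. Applying $\Hom(-,\N)$ to it recovers, via the adjunctions, precisely your map $d$ with $\Hom(\M,\N)=\ker d$; applying $\Hom(-,\I)$ for $\I$ injective gives the missing surjectivity of $d$ at once, and applying $\Hom(-,\I^\bullet)$ gives the fibre sequence
\[
R\Hom(\M,\N)\longrightarrow R\Hom_{\O_{X_A}}(\M,\N)\oplus R\Hom_{\O_{X_A}}(\M',\N')\overset{d}{\longrightarrow} R\Hom_{\O_{X_A}}(\M,\N')\oplus R\Hom_{\O_{X_A}}(\tau^\ast\M,\N')
\]
in one stroke (here one uses that exactness in the shtuka category is componentwise, so $\I'^\bullet$ resolves $\N'$ and $\I^\bullet$ resolves $\N$, together with your observation that these components are injective). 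With this repair your proof is complete, and the repaired argument is an interesting variant of the paper's: the paper instead resolves $\M$ by shtukas whose components are injective $\O_{S_A}$-modules, asserting that such shtukas are injective objects, so that the resulting double complex is an injective resolution of $\M^\bullet$; your adjoint\dash functor observation runs in the converse direction (injective shtukas have injective components) and works with an arbitrary injective resolution, at the cost of needing the standard resolution above to corepresent the Hom\dash complex.
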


The proof will be given in \S \ref{sec2}.

\subsection{Remarks}

\begin{remark}
Our notion of shtuka is the same as the one in V.~Lafforgue \cite{Lafforgue09}. It is similar to the one used by Drinfeld \cite{Drinfeld77E} and L.~Lafforgue \cite{Lafforgue98}, but of a more arithmetic nature. Rather than compactifying the ``coefficients'' $\Spec A$ to a complete curve, we compactify the ``base'' $\AA^1_k$ to $\PP^1_k$.  
\end{remark}

\begin{remark}
Shtukas are function field toy models for (conjectural) mixed motives. The Carlitz shtuka $\C$ is an analogue of the Tate motive $\Z(1)$ and Theorem \ref{thm2} should be compared with the isomorphisms
\[
	\Ext^1_X(\1,\Z(1))=\Gamma(X,\O_X^\times) \,\,\text{ and }\,\,
	\Ext^2_X(\1,\Z(1))=\Pic X
\]
from motivic cohomology, see for example \cite[p. 25]{Mazza06}.
\end{remark}

\begin{remark}
In the ($\infty$-adic) ``class number formula'' proven in \cite{Taelman10b}, the $A$-modules $\H^0(X,C)$ and $\H^1(X,C)$ play a role analogous to the groups of units and the class group in the classical class number formula. In the guise of $\Ext^1(\1,\C)$ and
$\Ext^2(\1,\C)$ they play a similar role in V.~Lafforgue's result 
\cite{Lafforgue09} on  ($v$-adic, $v\neq \infty$) special values.
\end{remark}

\begin{remark}
For any $m$ there is a natural isomorphism $\tau^\ast\O_X(m\infty)\isomto\O_X(qm\infty)$. 
So in the definition of the Carlitz shtuka one could twist both line bundles with $\O_X(-n\infty)$ for some $n\geq 0$ to obtain
\[
		\O_{X}(-n\infty) \boxtimes A
		\overset{ \sigma }{\longrightarrow}
		\O_{X}( (1-n)\infty ) \boxtimes A
		\overset{ 1 }{ \longleftarrow }
		\tau^\ast\O_{X}(-n\infty) \boxtimes A.
\]
The same results with the same proofs hold for this shtuka. We have chosen $n=0$ in our definition somewhat arbitrarily, distinguishing it from the other choices only by its
minimality.
\end{remark}

\begin{remark}
We have treated in this note only a very special case. One should try to obtain similar results for higher rank Drinfeld modules over general Drinfeld rings $A$, and even for the
abelian $t$-modules of Anderson \cite{Anderson86}. Unfortunately it seems that these generalizations are not without difficulty, and even for higher rank Drinfeld modules it is not clear to me what the precise statement should be.
\end{remark}

\subsection{Acknowledgements}

This work has been inspired by
work of Anderson and Thakur \cite{Anderson90}, Woo \cite{Woo95}, Papanikolas and Ramachandran
\cite{Papanikolas03}, and V.~Lafforgue \cite{Lafforgue09}. The author is grateful to David Goss for his feedback and constant encouragement, and to the referee for several useful suggestions. 

The author is supported by a VENI Grant from the Netherlands Organization for Scientific Research (NWO). Part of the research leading to this paper was carried out at the Ecole Polytechnique F\'ed\'erale de Lausanne. 

\section{The cohomology of the sheaf $C$}\label{comparison}

In this section we show how the modules $\H^0(X,C)$ and $\H^1(X,C)$ compare with the modules studied in \cite{Taelman10}. We recall the main result of \emph{loc. cit.} Consider the map
\[
	\delta\colon
		C_0(Y) \times \!\!\prod_{z\in X\backslash Y}\! K_z
		\to 
		\prod_{z\in X\backslash Y}\! C_0(K_z)
	\colon
		(c, (\gamma_z)_z ) \mapsto ( c - \exp \gamma_z )_z.
\]
\begin{theorem}[\cite{Taelman10}] \label{oldthm}
$\ker \delta$ is a finitely generated $A$-module and
$\coker \delta$ is a finite $A$-module.\qed
\end{theorem}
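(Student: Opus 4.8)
The plan is to run the function-field analogue of the classical proofs that the unit group is finitely generated and the class group is finite, treating every module in sight as a topological $k$-vector space via the valuations at the places above $\infty$. Write $F_\infty = k\llp t^{-1}\rrp$ for the completion of $F$ at $\infty$, and put $K_\infty = \prod_{z\in X\backslash Y}K_z$ and $C_0(K_\infty)=\prod_z C_0(K_z)$; these are finite-dimensional $F_\infty$-vector spaces, hence locally compact (this is where finiteness of $k$ enters). I would first rewrite $\delta$ as the continuous $k$-linear map
\[
	\delta\colon C_0(Y)\oplus K_\infty \to C_0(K_\infty),\qquad (c,\gamma)\mapsto \iota(c)-\exp(\gamma),
\]
where $\iota\colon C_0(Y)\to C_0(K_\infty)$ is the (diagonal) map induced by the completions and $\exp$ is applied place by place. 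Note that $\iota$ is injective, and that $\iota$ and $\exp$ are continuous and $k$-linear, with $\exp$ moreover $A$-linear; so $\ker\delta$ and $\coker\delta$ are topological $A$-modules.

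The local step is to pin down the behaviour of the Carlitz exponential. Since $\exp x = x + e_1x^q+\cdots$ has linear coefficient $1$ and is entire, the non-archimedean inverse mapping theorem shows it is a local isomorphism of topological $k$-vector spaces at $0$: it carries a small neighbourhood of $0$ homeomorphically onto a neighbourhood of $0$. Being additive, $\exp\colon K_z\to C_0(K_z)$ is then a local homeomorphism everywhere; consequently its image $\exp(K_z)$ is an open $A$-submodule of $C_0(K_z)$ and its kernel is a discrete $A$-submodule of $K_z$. Passing to products, $\exp(K_\infty)$ is open in $C_0(K_\infty)$, and $\exp^{-1}$ of a discrete subset of $C_0(K_\infty)$ is discrete.

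The geometric step is to show that $\iota$ realizes $C_0(Y)$ as a discrete, cocompact $A$-submodule of $C_0(K_\infty)$ — the analogue of $\O_K\subset K\otimes\R$. Because $C_0\cong\O_Y$ as sheaves of $k$-vector spaces, this is a statement about the additive group $B=\Gamma(Y,\O_Y)$ inside $K_\infty$. Discreteness holds because a nonzero $b\in B$ that is integral at every place above $\infty$ would be a global section of $\O_X(-D)$ for an effective divisor $D>0$ of positive degree, hence $0$. Cocompactness follows from the standard two-term computation identifying $K_\infty/(\iota(B)+\prod_z\O_z)$ with $\H^1(X,\O_X)$, which is finite-dimensional over the finite field $k$; since $\prod_z\O_z$ is compact open, this forces $C_0(K_\infty)/\iota(C_0(Y))$ to be compact.

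With these two inputs the conclusions are formal. For the cokernel, $\coker\delta = C_0(K_\infty)/(\iota(C_0(Y))+\exp(K_\infty))$ is discrete (it contains the open subgroup $\exp(K_\infty)$) and compact (it is a quotient of the compact $C_0(K_\infty)/\iota(C_0(Y))$), hence finite, so it is a finite $A$-module. For the kernel, projecting to the $\gamma$-coordinate identifies $\ker\delta$ with $\Lambda := \exp^{-1}(\iota(C_0(Y)))\subseteq K_\infty$ (here injectivity of $\iota$ is used to recover $c$ from $\gamma$); this $\Lambda$ is discrete, being the preimage of a discrete set under the local homeomorphism $\exp$. It then remains to invoke the lattice lemma that a discrete $A$-submodule of a finite-dimensional $F_\infty$-vector space is finitely generated over $A$, proved exactly as in the classical case by intersecting with a compact fundamental domain for a full-rank free sublattice and using that the intersection of a discrete set with a compact set is finite. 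I expect the genuine difficulty to lie entirely in the local step: one needs enough $\infty$-adic control of the Carlitz exponential to know it is a local isomorphism at $0$ with open image and discrete kernel. Once that analytic fact is in hand, the rest is the soft geometry-of-numbers dictionary (discrete $\times$ cocompact, compact $\cap$ discrete $=$ finite), which goes through verbatim because $k$ is finite and all the spaces are locally compact.
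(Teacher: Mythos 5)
Your proposal is correct, and it is essentially the argument of the cited source: the paper itself imports Theorem \ref{oldthm} from \cite{Taelman10} without proof, and that reference proves it exactly along your lines --- the $\infty$-adic local invertibility of the Carlitz exponential (which appears in this paper as Corollary \ref{corexpinv}), discreteness and cocompactness of $\Gamma(Y,\O_Y)$ in $K_\infty$ via $\H^1(X,\O_X)$, and the lattice lemma that a discrete $A$-submodule of a finite-dimensional $F_\infty$-vector space is finitely generated. So you have faithfully reconstructed the intended geometry-of-numbers proof rather than found a different route.
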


We now show that $\H^0(X,C)$ and $\H^1(X,C)$ coincide with the modules
$\ker \delta$ (``$\exp^{-1}C(R)$'' in the notation of \emph{loc. cit.}) and 
$\coker \delta$ (``$H_R$'') above, and hence that Proposition \ref{finiteness} follows
from Theorem \ref{oldthm}.

\begin{lemma}\label{lemmaC}
There is an exact sequence of $ A $-modules
\[
	0 \longrightarrow
		C( X )
	\longrightarrow
		C_0(Y) \times \!\!\prod_{z\in X\backslash Y}\! K_z 
	\overset{\delta}{\longrightarrow}
		\prod_{z\in X\backslash Y}\! C_0(K_z)
	\longrightarrow
		\H^1( X, C )
	\longrightarrow 0.
\]
\end{lemma}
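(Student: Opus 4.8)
The plan is to realise the sheaf $C$ as the kernel of a two-term complex of sheaves on $X$ whose outer terms are cohomologically trivial, and then to read off the four-term sequence from the associated long exact cohomology sequence. Write $Z:=X\setminus Y$ for the finite set of closed points lying over $\infty$, let $j\colon Y\injto X$ be the open immersion, and let $i_z\colon\{z\}\injto X$. Inspecting the definition of $C$, I see that it is exactly the fibre product of the restriction map $\rho\colon j_\ast C_0\to\bigoplus_{z\in Z}(i_z)_\ast C_0(K_z)$, $c\mapsto(c|_z)_z$, where $c|_z$ is the image of $c$ in the completion $K_z$, and the exponential $\bigoplus_{z\in Z}(i_z)_\ast K_z\to\bigoplus_{z\in Z}(i_z)_\ast C_0(K_z)$, $(\gamma_z)_z\mapsto(\exp\gamma_z)_z$. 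Equivalently, with $\delta=\rho-\exp$, there is a left exact sequence of sheaves
\[
	0\longrightarrow C\longrightarrow\bigl(j_\ast C_0\bigr)\oplus\Bigl(\bigoplus_{z\in Z}(i_z)_\ast K_z\Bigr)\overset{\delta}{\longrightarrow}\bigoplus_{z\in Z}(i_z)_\ast C_0(K_z),
\]
and the content of this first step is merely to check that $\ker\delta=C$, which is immediate from the defining condition $\exp\gamma_z=c|_z$.

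The second step is to show that $\delta$ is in fact surjective as a map of sheaves, so that the displayed sequence is short exact. This can be checked on stalks, and only the points of $Z$ matter, since the target is a skyscraper sheaf supported on $Z$. At a point $z\in Z$ the stalk of $j_\ast C_0$ is the function field $K$ (germs of functions regular on a punctured neighbourhood of $z$), mapping to $C_0(K_z)=K_z$ through the dense embedding $K\injto K_z$, while the skyscraper summand contributes $K_z$ mapping in by $-\exp$. Surjectivity thus reduces to the identity $K+\exp(K_z)=K_z$ of additive subgroups of $K_z$. For this I would combine two facts: that $K$ is dense in $K_z$, and that $\exp(x)=x+e_1x^q+\cdots$ is an isometry in a neighbourhood of $0$ (the higher terms have strictly larger valuation once $v(x)$ is large enough), so that $\exp(K_z)$ contains an open neighbourhood of $0$. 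The sum of a dense subgroup and an open subgroup is the whole group, which gives the surjectivity. This local analytic input is the one genuinely non-formal ingredient, and is where I expect the real work to lie.

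For the final step I take the long exact cohomology sequence of the short exact sequence of sheaves. The two skyscraper sheaves are flasque, hence acyclic, so they contribute only their global sections $\prod_z K_z$ and $\prod_z C_0(K_z)$ and vanish in degrees $\geq 1$. It then remains to see that $\H^1(X,j_\ast C_0)=0$: since $Y\to\Spec A$ is finite, $Y$ is affine and $j$ is an affine morphism (the preimage of an affine open of $X$ is the complement of finitely many points in an affine curve), so $R^{q}j_\ast C_0=0$ for $q>0$ — here one uses only the structure of $C_0\cong\O_Y$ as a sheaf of abelian groups — and Leray gives $\H^1(X,j_\ast C_0)=\H^1(Y,C_0)=\H^1(Y,\O_Y)=0$. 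Feeding $\H^0(X,j_\ast C_0)=C_0(Y)$, $\H^0(X,C)=C(X)$, and the global sections of the skyscrapers into the long exact sequence collapses it to exactly
\[
	0\longrightarrow C(X)\longrightarrow C_0(Y)\times\!\!\prod_{z\in X\setminus Y}\!\!K_z\overset{\delta}{\longrightarrow}\prod_{z\in X\setminus Y}\!\!C_0(K_z)\longrightarrow\H^1(X,C)\longrightarrow 0,
\]
which is the assertion of the lemma.
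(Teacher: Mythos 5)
Your proposal is correct and follows essentially the same route as the paper: the same short exact sequence of sheaves $0 \to C \to j_\ast C_0 \oplus \bigoplus_z (i_z)_\ast K_z \to \bigoplus_z (i_z)_\ast C_0(K_z) \to 0$, with surjectivity resting on the identity $K + \exp(K_z) = K_z$ (the paper's $C_0(K_z) = C_0(K) + \exp K_z$, deduced from its Corollary \ref{corexpinv} on local invertibility of $\exp$, which is exactly the ``open image plus dense subgroup'' input you identify), followed by the long exact cohomology sequence together with the vanishing $\mathrm{H}^1(X, j_\ast C_0) = \mathrm{H}^1(Y, \O_Y) = 0$. Your write-up merely makes explicit some details the paper leaves implicit (the stalk computation at $z \in X\setminus Y$ and the Leray argument for the affine morphism $j$), so no substantive difference remains.
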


\begin{proof}
Denote by $i\colon Y\to X$ and by
$ i_z\colon \{z\} \to X$ the inclusions of $Y$ and the points $z$ in $ X $. Then
the following sequence of sheafs on $ X $ is exact:
\begin{equation}\label{sheafshort}
	0 \longrightarrow
		C
	\longrightarrow
		i_\ast C_0 \times \!\!\prod_{z\in X\backslash Y}\! i_{z,\ast} K_z
	\longrightarrow
		\prod_{z\in X\backslash Y}\! i_{z,\ast} C_0(K_z)
	\longrightarrow 0.
\end{equation}
(Here the middle map is the difference of the natural map and the map induced by $\exp$.) Left exactness follows from the definition of $ C $. For right exactness, one uses the fact
that for all $z\in X\backslash Y$ we have $C_0(K_z) = C_0(K) + \exp K_z  $ (which follows, for example, from Corollary \ref{corexpinv} below).

Note that $\H^1(X,i_\ast C_0)=\H^1(Y,C_0)=0$ so that the desired exact sequence is precisely
the long exact sequence of cohomology obtained from taking global sections in
(\ref{sheafshort}). 
\end{proof}

\section{Proof of Theorem \ref{thm1}}\label{sec1}

\subsection{Away from $\infty$}

Let $ R $ be an $ A $-algebra. Denote by $\alpha$ the $A$-linear map
\[
	\alpha\colon R\otimes A \to C_0(R)\colon r \otimes a \mapsto \phi(a)(r).
\]

\begin{proposition}\label{propaway}
The sequence of $ A $-modules
\begin{equation}\label{away}
	0 \longrightarrow R\otimes A \overset{\partial}{\longrightarrow} R\otimes A 
	\overset{\alpha}{\longrightarrow} C_0(R) \longrightarrow 0
\end{equation}
is exact.
\end{proposition}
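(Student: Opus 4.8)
The plan is to reduce everything to a statement about a one-variable polynomial ring. Identify $R\otimes A=R\otimes_k k[t]$ with the polynomial ring $R[s]$ in the single variable $s:=1\otimes t$; here the element $t\in R$ coming from the structure map $A\to R$ is a \emph{constant} in $R[s]$, unrelated to the variable $s$. Under this identification the $A$-action through the second factor is multiplication by $s$. Writing $\tau$ for the coefficientwise $q$-power Frobenius, $\tau(\sum_i f_i s^i):=\sum_i f_i^q s^i$ (which fixes $s$, and fixes all of $k[s]$ because $\lambda^q=\lambda$ for $\lambda\in k$), a direct check on the simple tensors $f\otimes a$ shows that $\partial$ becomes the additive map
\[
	\partial\colon g \mapsto (s-t)\,g - \tau(g),
\]
while $\alpha$ sends $\sum_i f_i s^i$ to $\sum_i \phi(t)^i(f_i)\in C_0(R)=R$. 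With this dictionary the proposition becomes a sequence of short, degree-based arguments.

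First I would dispatch the two easy points. Surjectivity of $\alpha$ is immediate: for $c\in C_0(R)=R$ one has $\alpha(c\otimes 1)=\phi(1)(c)=c$, since $\phi$ is a $k$-algebra homomorphism and hence $\phi(1)=\id$. For the inclusion $\im\partial\subseteq\ker\alpha$ I would compute $\alpha\circ\partial$ directly on a generator $f\otimes a$:
\[
	\alpha(\partial(f\otimes a)) = \phi(ta)(f)-\phi(a)(tf+f^q).
\]
Because $\phi$ is a homomorphism of commutative rings, $\phi(ta)=\phi(a)\phi(t)$, and since $\phi(t)(f)=tf+f^q$ this equals $\phi(a)(tf+f^q)-\phi(a)(tf+f^q)=0$. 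This is the only place where commutativity of $\phi$ enters, and it is the one computational fact the argument turns on.

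Next, injectivity of $\partial$ follows from a leading-term comparison. For $g\neq 0$ of $s$-degree $n$ with leading coefficient $f_n\neq 0$, the summand $(s-t)g$ has $s$-degree $n+1$ with leading coefficient $f_n$, whereas $\tau(g)$ has $s$-degree $n$; hence $\partial(g)$ has $s$-degree $n+1$ and leading coefficient $f_n\neq 0$, so $\partial(g)\neq 0$. Note that this uses nothing about $R$ beyond $f_n\neq 0$, so it is insensitive to zero divisors.

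The heart of the matter, and the step I expect to require the most care, is the remaining inclusion $\ker\alpha\subseteq\im\partial$, which I would prove by induction on the $s$-degree. The same leading-term computation gives
\[
	\partial(f_n s^{n-1}) = f_n s^{n} - (tf_n+f_n^q)\, s^{n-1},
\]
so for $g\in\ker\alpha$ of degree $n\geq 1$ with leading coefficient $f_n$, the element $g-\partial(f_n s^{n-1})$ has strictly smaller $s$-degree and, since $\im\partial\subseteq\ker\alpha$ was already established, still lies in $\ker\alpha$. Iterating, every $g\in\ker\alpha$ is congruent modulo $\im\partial$ to a constant $f_0\in R$ lying in $\ker\alpha$; but $\alpha(f_0)=\phi(1)(f_0)=f_0$, so $f_0=0$ and $g\in\im\partial$. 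Together with the three earlier points this yields exactness of (\ref{away}). The only thing to watch is that the Frobenius twist $\tau$ \emph{lowers} the degree of the correction term rather than raising it, so that the induction genuinely terminates; this is precisely what makes $\partial$ behave like multiplication by $(s-t)$ up to lower-order terms.
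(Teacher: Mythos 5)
Your proof is correct and complete; the paper simply declares this proposition ``Straightforward'' and gives no argument, and your identification $R\otimes A\cong R[s]$ with $\partial(g)=(s-t)g-\tau(g)$, followed by the leading-term injectivity check and the degree-lowering induction for $\ker\alpha\subseteq\im\partial$, is precisely the standard argument being taken for granted. You also correctly flag the two points worth care: that $\tau$ only lowers (never raises) the degree of the correction term, and that the injectivity argument needs only $f_n\neq 0$, so everything works for an arbitrary $A$-algebra $R$, zero divisors and all.
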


\begin{proof} Straightforward. \end{proof}

In particular this provides the desired short exact sequence of sheaves (\ref{thm1seq}) on the
affine curve $Y\subset X$. In the following paragraphs we will extend it to the whole of $ X $.

\subsection{Inversion of the exponential map} Let $z\in X\backslash Y$ and let
$|\cdot|$ be an absolute value on $K_z$ inducing the $z$-adic topology, so in particular $|t|>1$. 

\begin{lemma} For all $ x \in K_z $ with $ |x| < |t|^{q/(q-1)} $ we have $ |\exp x-x| < |x| $.
\end{lemma}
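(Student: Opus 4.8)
The plan is to write the difference as the tail of the defining series,
\[
  \exp x - x = \sum_{i\ge 1} e_i x^{q^i},
\]
and to bound it term by term using the fact that $K_z$ is complete and non-archimedean: it then suffices to show $|e_i x^{q^i}| < |x|$ for every $i\ge 1$, for then $|\exp x - x|\le \max_{i\ge 1}|e_i x^{q^i}| < |x|$, the maximum being attained because the terms tend to $0$. (We may assume $x\ne 0$.) The whole argument thus reduces to estimating the absolute values $|e_i|$, and everything rests on a good formula for these coefficients.

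To obtain such a formula I would extract a recursion from the functional equation \eqref{expfuneq}. Writing $\exp x=\sum_{i\ge 0}e_ix^{q^i}$ with $e_0=1$, and using that $(\exp x)^q=\sum_{i\ge 0}e_i^{\,q}x^{q^{i+1}}$ in characteristic $p$, comparison of the coefficients of $x^{q^i}$ on the two sides of $\exp(tx)=t\exp x+(\exp x)^q$ gives
\[
  (t^{q^i}-t)\,e_i = e_{i-1}^{\,q}\qquad (i\ge 1),
\]
and hence the closed form
\[
  e_i=\prod_{j=1}^{i}(t^{q^j}-t)^{-q^{i-j}}.
\]
Since $|t|>1$ we have $|t^{q^j}-t|=|t|^{q^j}$ for every $j\ge 1$, so each factor has absolute value $|t|^{-q^{i}}$ and therefore $|e_i|=|t|^{-iq^i}$.

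It then remains to compare sizes. From the formula for $|e_i|$ we get $|e_i x^{q^i}|=|x|^{q^i}|t|^{-iq^i}$, and $|e_i x^{q^i}|<|x|$ is equivalent (for $x\ne 0$) to $|x|<|t|^{\,iq^i/(q^i-1)}$. The exponent $iq^i/(q^i-1)=i/(1-q^{-i})$ attains its minimum $q/(q-1)$ at $i=1$; since $|t|>1$, the hypothesis $|x|<|t|^{q/(q-1)}$ therefore forces $|x|<|t|^{\,iq^i/(q^i-1)}$, i.e.\ $|e_i x^{q^i}|<|x|$, for every $i\ge 1$. Feeding this into the ultrametric bound of the first paragraph completes the proof.

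The computations are routine; the only point requiring a moment's care is the verification that $i=1$ is the binding case, i.e.\ that $i/(1-q^{-i})$ is minimized over $i\ge 1$ at $i=1$. This is the elementary inequality $q/(q-1)\le 2\le i/(1-q^{-i})$ for $i\ge 2$, which holds because $q\ge 2$. I expect no genuine obstacle beyond keeping the exponents of $q$ straight in the coefficient computation.
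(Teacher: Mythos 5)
Your proposal is correct and takes essentially the same route as the paper: the paper's proof likewise extracts $|e_i|=|t|^{-iq^i}$ from the functional equation (\ref{expfuneq}) and then bounds the tail $\sum_{i>0}e_ix^{q^i}$ termwise by the ultrametric inequality. You have merely made explicit the steps the paper leaves to the reader --- the recursion $(t^{q^i}-t)e_i=e_{i-1}^q$, the closed form for $e_i$, and the check that $i=1$ is the binding case in $|x|<|t|^{iq^i/(q^i-1)}$.
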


\begin{proof}
Write $ \exp x = \sum_{i=0}^\infty e_ix^{q^i} $. It follows from (\ref{expfuneq}) and
from $e_0=1$ that for all $ i $ we have $|e_i| = |t|^{-iq^i}$. From this one deduces that for
all $i>0$ and all $x$ with $|x|<|t|^{q/(q-1)}$ the inequality $|e_ix^{q^i}| < |x|$
holds. Hence $|\exp x -x|=|\sum_{i>0} e_ix^{q^i} | < |x| $, as claimed.
\end{proof}

\begin{corollary}\label{corexpinv}
For all $m\leq 1$ the exponential map restricts to a $k$-linear isomorphism
$t^m\OXz \to t^m \OXz$.\qed
\end{corollary}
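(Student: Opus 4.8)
The plan is to check four things for $m\le 1$: that $\exp$ is $k$-linear, that it maps $t^m\OXz$ into itself, that it is injective there, and that it is surjective onto $t^m\OXz$. The first point is immediate from the shape $\exp x=\sum_i e_ix^{q^i}$, since each Frobenius power $x\mapsto x^{q^i}$ is $k$-linear (because $\lambda^q=\lambda$ for $\lambda\in k$); thus $\exp$ is an additive, $k$-linear operator on $K_z$.

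For the middle two points I would simply feed the Lemma the elements of the ball. If $x\in t^m\OXz$ and $m\le 1$, then $|x|\le |t|^m\le |t|<|t|^{q/(q-1)}$, using $|t|>1$ and $q/(q-1)>1$; so the Lemma applies and gives $|\exp x-x|<|x|$. The ultrametric inequality then forces $|\exp x|=|x|\le |t|^m$, so $\exp x\in t^m\OXz$ and $\exp$ preserves the ball. It also forces injectivity: if $\exp x=0$ with $x\ne 0$ we would have $|\exp x-x|=|x|$, contradicting the strict inequality.

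The real work is surjectivity, which I would get by a fixed-point argument exploiting that $h:=\exp-\id$ is additive. Solving $\exp x=y$ for a given $y\in t^m\OXz$ amounts to finding a fixed point of $g(x)=y-h(x)$. I would first strengthen the Lemma to a uniform contraction estimate on the whole ball: from $|e_i|=|t|^{-iq^i}$ one gets $|e_ix^{q^i}|\le |t|^{-1}|x|$ for every $i\ge 1$ and every $x\in t^m\OXz$, the governing inequality being $(m-i)q^i-m\le -1$, which holds for all $i\ge 1$ precisely because $m\le 1$. Hence $|h(x)|\le |t|^{-1}|x|$, and since $h$ is additive, $g(x_1)-g(x_2)=-h(x_1-x_2)$ gives $|g(x_1)-g(x_2)|\le |t|^{-1}|x_1-x_2|$. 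Thus $g$ is a genuine contraction with ratio $|t|^{-1}<1$. As $t^m\OXz$ is a closed ball in the complete field $K_z$ it is complete, and $g$ maps it into itself (because $|g(x)|\le\max(|y|,|h(x)|)\le |t|^m$), so $g$ has a unique fixed point $x\in t^m\OXz$, which satisfies $\exp x=y$.

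The one delicate point is the passage from the Lemma's strict inequality $|\exp x-x|<|x|$ to the uniform contraction constant $|t|^{-1}$, which is what actually produces the fixed point; a bare strict inequality would not by itself suffice. This comes down to the elementary estimate $(m-i)q^i-m\le -1$ for $i\ge 1$, $m\le 1$, and is the only place the hypothesis $m\le 1$ enters. Equivalently, one could avoid the fixed-point formalism altogether and write the inverse directly as the convergent $k$-linear series $\sum_{n\ge 0}(-1)^n h^{\circ n}$, whose convergence is guaranteed by the same contraction estimate.
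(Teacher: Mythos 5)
Your proof is correct, and since the paper states the corollary without any written proof (it carries only a closing tombstone, being treated as immediate from the Lemma), your argument fleshes out exactly the successive\dash approximation inversion the author leaves implicit: the Lemma applies on $t^m\OXz$ because $|x|\le |t|^m\le |t|<|t|^{q/(q-1)}$, the ultrametric then gives $|\exp x|=|x|$, hence stability of the ball and (using additivity) injectivity, and surjectivity follows by inverting $\id+h$ with $h=\exp-\id$. Your one strengthening of the Lemma --- rederiving from $|e_i|=|t|^{-iq^i}$ the uniform bound $|h(x)|\le |t|^{-1}|x|$ on $t^m\OXz$, via the exponent estimate $(m-i)q^i-m\le -1$ for $i\ge 1$, $m\le 1$ --- is correctly computed (with equality at $i=1$, $m=1$, showing where $m\le 1$ is used and that the estimate is sharp there). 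It is, however, not strictly necessary in this setting: $K_z$ is \emph{discretely} valued, so for an additive map $h$ the pointwise strict inequality $|h(x)|<|x|$ of the Lemma automatically upgrades to $|h(x)|\le c^{-1}|x|$, where $c>1$ generates the value group, and the approximations $x_{n+1}=y-h(x_n)$ converge for that reason alone; this is presumably why the paper felt entitled to no proof at all. Your caution would be genuinely needed over a non-discretely valued field, and your closing alternative --- writing the inverse directly as the convergent series $\sum_{n\ge 0}(-1)^n h^{\circ n}$ --- is also valid and has the small added benefit of exhibiting $k$-linearity of $\log$ explicitly.
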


We denote its inverse by $\log$.

\subsection{Near $\infty$}

Let $z\in X\backslash Y$. Consider the $A$-linear map
\[
	\lambda\colon t\OXz\otimes A \to K_z\colon f\otimes a \mapsto a\log f.
\]

\begin{proposition}\label{propLie}
The sequence of $A$-modules
\begin{equation}\label{extLie}
	0 \longrightarrow
	\OXz\otimes A 
		\overset{\partial}{\longrightarrow}
	t\OXz\otimes A 
		\overset{\lambda}{\longrightarrow}
	K_z \longrightarrow
	0
\end{equation}
is exact.
\end{proposition}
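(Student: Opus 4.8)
The plan is to dispose of the formal parts directly and then reduce the essential content to a purely linear statement by conjugating with $\log$, which removes the exponential from the picture entirely. Injectivity of $\partial$ is immediate: writing an element of $\OXz\otimes A$ as a polynomial $\sum_j f_j\otimes t^j$ in the $A$\dash variable, the top coefficient of $\partial(\sum_{j=0}^N f_j\otimes t^j)$ is $f_N\otimes t^{N+1}$, so $\partial$ kills nothing nonzero. That $\lambda\circ\partial=0$ amounts to the identity $\log(tf+f^q)=t\log f$, which is exactly the $A$\dash linearity of $\exp$: by the functional equation (\ref{expfuneq}) one has $\exp(t\log f)=\phi(t)(f)=tf+f^q$, and since $f\in\OXz$ gives $\log f\in\OXz$ and hence $t\log f\in t\OXz$, Corollary \ref{corexpinv} lets us apply $\log$ to recover $t\log f=\log(tf+f^q)$.

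Now I would linearize. By Corollary \ref{corexpinv} the map $\log$ is a $k$\dash linear bijection both of $\OXz$ and of $t\OXz$, so $\log\otimes\id_A$ is an $A$\dash linear automorphism of $\OXz\otimes A$ and of $t\OXz\otimes A$. Using the identity $\log(tf+f^q)=t\log f$ just established, a check on simple tensors shows that $\log\otimes\id$ intertwines $\partial$ with the \emph{linear} map
\[
	\partial'\colon \OXz\otimes A\to t\OXz\otimes A,\qquad
	g\otimes a\mapsto g\otimes ta - tg\otimes a,
\]
and carries $\lambda$ to the multiplication map $\mu\colon t\OXz\otimes A\to K_z$, $g\otimes a\mapsto ag$. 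Since $\log\otimes\id$ is an isomorphism in the two left-hand terms and the identity on $K_z$, the sequence (\ref{extLie}) is exact if and only if
\[
	0\longrightarrow \OXz\otimes A\overset{\partial'}{\longrightarrow}
	t\OXz\otimes A\overset{\mu}{\longrightarrow} K_z\longrightarrow 0
\]
is exact, and the latter no longer involves $\exp$.

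To analyse this linear sequence I would identify $M\otimes A=M\otimes_k k[t]$ with the polynomials in a variable $s$ (corresponding to $t\in A$) with coefficients in $M$; then $\mu$ is evaluation $s\mapsto t\in K_z$, and $\partial'$ is multiplication by $(s-t)$. Injectivity of $\partial'$ is then clear, since $K_z\otimes A=K_z[s]$ is a domain and $s-t\neq 0$. For surjectivity of $\mu$, given $y\in K_z$ I choose $N\gg 0$ with $t^{-N}y\in t\OXz$; then $(t^{-N}y)\otimes t^N$ maps to $y$. The heart of the matter is exactness in the middle, and I expect this to be the one genuine obstacle. Let $v$ be the valuation with $v(t)=-d<0$, so that $t\OXz=\{v\ge -d\}$. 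Given $P=\sum_{m=0}^N h_m\otimes t^m$ with $h_m\in t\OXz$ and $P(t)=0$, the polynomial $P/(s-t)$ a priori only has coefficients in $K_z$, and the point is that they in fact lie in $\OXz$. This follows from a valuation estimate on the leading term: from $h_N t^N=-\sum_{m<N}h_m t^m$ one gets $v(h_m t^{m-N})\ge -d+(N-m)d=(N-m-1)d\ge 0$ for $m<N$, whence $h_N\in\OXz$. Subtracting $\partial'(h_N\otimes t^{N-1})$ from $P$ then lowers the degree while keeping the coefficients in $t\OXz$ and the value at $t$ equal to $0$, so induction on the degree finishes the argument. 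The crucial input throughout is that $t$ has a pole at $z$, i.e.\ $v(t)<0$; this is precisely what forces the division by $(s-t)$ to stay integral.
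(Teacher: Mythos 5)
Your proof is correct and is essentially the paper's argument: the paper likewise conjugates the sequence by the isomorphisms of Corollary \ref{corexpinv} (via $\exp\otimes\id$, the inverse of your $\log\otimes\id$) to reduce to the linear sequence with $1\otimes t-t\otimes 1$ and the multiplication map $\mu$. The only difference is that the paper asserts the exactness of that linear sequence without proof, whereas you verify it in detail (the $M[s]$ identification, the valuation estimate using $v(t)<0$, and the induction on degree), correctly supplying the step the paper leaves implicit.
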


\begin{proof}
Denote by $\mu$ the multiplication  map
\[
	\mu\colon t\OXz\otimes A \to K_z\colon f\otimes a \mapsto af.
\]
Using the identity (\ref{expfuneq})
one verifies that the diagram
\[
\begin{CD}
	0 @>>> \OXz\otimes A 
	 	@>{1\otimes t-t\otimes 1}>>
		t\OXz\otimes A 
		@>{\mu}>> K_z @>>> 0 \\
	@. @VV{\exp\otimes\id}V  @VV{\exp\otimes\id}V @VV{\id}V @. \\
	0 @>>> \OXz\otimes A 
	 	@>{\partial}>>
		t\OXz\otimes A
		@>{\lambda}>> K_z @>>> 0
\end{CD}
\]
commutes. The vertical arrows are isomorphisms by Corollary \ref{corexpinv} and since the top sequence is exact the same holds for the bottom sequence.
\end{proof}

\subsection{Conclusion}

It is now a purely formal matter to deduce Theorem \ref{thm1} from Propositions \ref{propaway}
and \ref{propLie}:

\begin{proof}[Proof of Theorem \ref{thm1}]
Clearly the map
\[
	\O_X\otimes A \overset{\partial}{\longrightarrow}
		\O_X(\infty) \otimes A
\]
is injective. We need to construct an isomorphism $\coker \partial  \isomto C$.

For every open $U\subset X$ and every integer $m$ we have an exact sequence
\[
	0 \longrightarrow
	\O_X(m\infty)(U) \longrightarrow
	\O_X(U\cap Y) \times t^m \prod_{z} \OXz  \overset{\delta}{\longrightarrow}
	\prod_z K_z 
\]
where the products range over $z\in U\backslash Y$ and where $\delta(f,g):=f-g$. If moreover
$U$ is affine then $\delta$ is surjective and we obtain a short exact sequence which
we denote by $E(m)$. 

Now, for an affine $U$, consider the map of exact sequences
\[
	\partial\colon E(0)\otimes A \to E(1) \otimes A.
\]
It is injective in all three positions. Using 
(\ref{away}) and (\ref{extLie}) one sees that the quotient is isomorphic with a short exact
sequence
\[
	0 \longrightarrow
	(\coker \partial)(U) \longrightarrow
	C_0(U\cap Y) \times \prod_z K_z \longrightarrow 
	\prod_z C_0(K_z) \longrightarrow
	0,
\]
the last map being $(c,f)\mapsto c-\exp f $. This provides an isomorphism
$(\coker \partial)(U) \isomto C(U)$ for every affine open $ U\subset X$, and clearly these
glue to an isomorphism of sheaves. This proves Theorem \ref{thm1}. 
\end{proof}

\section{Proof of Theorem \ref{thm2}} \label{sec2}

Let $ S $ be a $ k $-scheme. For any $ \O_{S_A} $-module $ \F $ we 
denote by $\tau$ the canonical isomorphism of $ S_A $-sheaves
\[
	\tau\colon \F \longrightarrow \tau_A^\ast \F
\]
which is $ A $-linear but generally \emph{not} $ \O_{S_A} $-linear. If
\[
	\M = \left[  
	\M \overset{\sigma}{\longrightarrow} \M' \overset{j}{\longleftarrow} \tau^\ast \M 
	\right]
\]
is a shtuka on $ S $ then we denote by $ \M^\bullet $ the complex of  $ S_A $-sheaves
\[
	 \M \overset{\partial}{\longrightarrow} \M'
\]
in degrees $0$ and $1$, with $\partial=\sigma-j\circ\tau$.

The following Proposition can be found implicitly in \cite{Lafforgue09}.  

\begin{proposition}\label{proplaff}
For all $ i $ and all $\M$ there are natural isomorphisms 
\[
	\Ext^i( \1, \M ) = \HH^i( S_A, \M^\bullet ),
\]
functorial in $ \M $ and in $ S $.
\end{proposition}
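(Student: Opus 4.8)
The plan is to compute $R\Hom(\1,\M)$ in the category of shtukas by replacing $\1$ with an explicit length\dash one resolution by ``free'' shtukas, for which $R\Hom(-,\M)$ visibly computes the sheaf cohomology of the two terms of $\M^\bullet$. Write $\O=\O_{S_A}$. The two ``slot'' functors $u_0,u_1$ from shtukas to quasi-coherent $\O$-modules, $u_0(\M)=\M$ and $u_1(\M)=\M'$, are exact (the abelian structure being computed slotwise), and each admits an exact left adjoint: for a quasi-coherent sheaf $\N$ I set
\[
	F_0(\N)=\big[\,\N\xrightarrow{\iota_1}\N\oplus\tau^\ast\N\xleftarrow{\iota_2}\tau^\ast\N\,\big],\qquad
	F_1(\N)=\big[\,0\to\N\leftarrow 0\,\big],
\]
with $\iota_1,\iota_2$ the two inclusions. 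Checking the defining commutativities gives $\Hom(F_0\N,\M)=\Hom(\N,\M)$ and $\Hom(F_1\N,\M)=\Hom(\N,\M')$, so $F_0\dashv u_0$ and $F_1\dashv u_1$; both $F_0,F_1$ are exact (using exactness of $\tau^\ast$, which is part of the abelian structure). Consequently $u_0,u_1$ preserve injectives.

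Next I would write down the resolution. Using the canonical identification $\tau^\ast\O\isomto\O$ built into $\1$, the map $p\colon F_0(\O)\to\1$ that is the identity on first slots and $(\mathrm{id},\,\tau^\ast\O\isomto\O)$ on middle slots is an epimorphism of shtukas, and its kernel is exactly $F_1(\tau^\ast\O)$, included on middle slots as the antidiagonal $\ker\big((\mathrm{id},\,\tau^\ast\O\isomto\O)\colon\O\oplus\tau^\ast\O\to\O\big)$. This yields a short exact sequence of shtukas
\[
	0\longrightarrow F_1(\tau^\ast\O)\overset{\iota}{\longrightarrow}F_0(\O)\overset{p}{\longrightarrow}\1\longrightarrow 0.
\]
Applying $R\Hom(-,\M)$ and using that $u_0,u_1$ are exact and injective-preserving, I compute the two terms by derived adjunction:
\[
	R\Hom(F_0\O,\M)=R\Gamma(S_A,\M),\qquad
	R\Hom(F_1(\tau^\ast\O),\M)=R\Gamma(S_A,\M'),
\]
the second via $\tau^\ast\O\cong\O$. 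Hence $R\Hom(\1,\M)$ is the shifted mapping cone of the induced map $R\Gamma(S_A,\M)\to R\Gamma(S_A,\M')$, i.e. $R\Gamma(S_A,\M^\bullet)$; taking $\H^i$ then gives $\Ext^i(\1,\M)=\HH^i(S_A,\M^\bullet)$. Naturality of all of these constructions in $\M$ and in $S$ delivers the asserted functoriality.

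The one non-formal point — and the step I expect to be the main obstacle — is to identify the map $\iota^\ast\colon R\Gamma(S_A,\M)\to R\Gamma(S_A,\M')$ induced by $\iota$ with the map $\partial=\sigma-j\circ\tau$ that defines $\M^\bullet$. This is exactly where the Frobenius\dash semilinearity of $\tau$ enters, since $\partial$ combines the $\O$-linear $\sigma$ with the $\sigma$-semilinear composite $j\circ\tau$. I would verify it in degree $0$ and propagate by naturality: a section $s\in\Gamma(\M)=\Hom(F_0\O,\M)$ corresponds to the shtuka map whose middle component restricts to $\sigma\circ s$ and to $j\circ\tau^\ast s$ on the two summands of $\O\oplus\tau^\ast\O$; restricting this along $\iota$ and invoking the naturality of the canonical $\tau$ (so that $\tau^\ast s$ precomposed with $\tau_\O$ equals $\tau_\M\circ s$) yields $\iota^\ast(s)=-(\sigma s-j\tau s)=-\partial s$ under $\Hom(\tau^\ast\O,\M')\cong\Gamma(\M')$. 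The sign is harmless (it can be absorbed into $\iota$), and because the same sheaf-level map $\partial$ induces $\iota^\ast$ in every degree, the mapping\dash cone description above gives the natural isomorphisms $\Ext^i(\1,\M)\isomto\HH^i(S_A,\M^\bullet)$.
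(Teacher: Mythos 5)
Your proof is correct, and it takes a genuinely different route from the paper's. The paper works from the other side: it first identifies $\Hom(\1,\M)$ with $\HH^0(S_A,\M^\bullet)$ by the direct observation that a morphism of shtukas $\1\to\M$ is exactly a global section $f$ of $\M$ with $\partial f=0$, and then resolves the \emph{target} $\M$ by injective shtukas whose two slots are injective $\O_{S_A}$-modules, so that applying $\Hom(\1,-)$ termwise yields the double complex computing $\HH^i(S_A,\M^\bullet)$. You instead resolve the \emph{source} $\1$ by the two-term complex of induced shtukas $0\to F_1(\tau^\ast\O)\to F_0(\O)\to\1\to 0$ (the sequence and the identification of the kernel with the antidiagonal copy of $\tau^\ast\O$ are correct) and exploit the adjunctions $F_0\dashv u_0$, $F_1\dashv u_1$. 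Your degree-zero verification that $\iota^\ast=-\partial$ is the paper's $\Hom$-computation in disguise — same diagram, same appeal to naturality of the canonical semilinear $\tau$ — and the sign is indeed harmless, since $(\id,-\id)$ is an isomorphism of two-term complexes. What your route buys: a canonical resolution of $\1$, hence the stronger derived-category statement $R\Hom(\1,\M)\simeq R\Gamma(S_A,\M^\bullet)$ with functoriality in $\M$ and $S$ for free, and an explicit conceptual source for the map $\partial$. What the paper's route buys: it never needs to discuss exactness of $\tau_A^\ast$, relying instead on its (unproved, and as literally stated too strong) assertion that every shtuka with injective slots is an injective object; that assertion really requires the shtukas to be of coinduced form, e.g.\ $[\,\N\to 0\leftarrow\tau_A^\ast\N\,]$ and $[\,\N\oplus\tau_{A\ast}\N\to\N\leftarrow\tau_A^\ast(\N\oplus\tau_{A\ast}\N)\,]$, the right adjoints of the slot functors.

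One justification in your writeup is off, though at the same level of glossing as the original. Exactness of $\tau_A^\ast$ is \emph{not} part of the abelian structure: kernels and cokernels of shtukas are formed slotwise in the first two slots (only right-exactness of $\tau_A^\ast$ is used, for cokernels), so the category is abelian for any $k$-scheme $S$. Exactness of $\tau_A^\ast$ amounts to flatness of the partial Frobenius, which can fail for non-regular $S$ (Kunz); it does hold in the case the theorem is applied to, namely $S=X$ smooth over $k$, where $\tau$ is finite flat. You invoke it exactly where you claim $F_0$ is exact, hence that $u_0$ preserves injectives. For general $S$ this can be repaired without flatness: resolve $\M$ by the coinduced shtukas above, whose slots are direct sums of injectives and of $\tau_{A\ast}$ of injectives, hence flasque (pushforward along the affine morphism $\tau_A$) and $\Gamma$-acyclic, so that $\Hom(F_0\O,\I^\bullet)$ and $\Hom(F_1(\tau^\ast\O),\I^\bullet)$ still compute $R\Gamma(S_A,\M)$ and $R\Gamma(S_A,\M')$. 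Since the paper's own proof leans on a comparable unproved claim at precisely this point, this caveat does not put your argument below the paper's standard of rigor.
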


Before giving a proof, we first deduce Theorem \ref{thm2} from this proposition.

\begin{proof}[Proof of Theorem \ref{thm2}]
Applying the Proposition to $S=X$ and $\M=\C$ we find
\[
	\Ext^i( \1, \C ) = \HH^i( X_A,
		\O_X\boxtimes A \overset{\partial}{\longrightarrow} \O_X(\infty)\boxtimes A ).
\]
The latter is isomorphic with
\[
	\HH^i( X, \O_X\otimes A \overset{\partial}{\longrightarrow} \O_X(\infty)\otimes A )
\]
which by Theorem \ref{thm1} is isomorphic with $\H^{i-1}( X, C ) $.
\end{proof}

\begin{proof}[Proof of Proposition \ref{proplaff}]
We will first establish a canonical isomorphism for $ i=0 $, and then conclude the
general case by a purely formal argument.

A homomorphism $\1 \to \M $ of shtukas on $ S $ is a commutative diagram
\[
\begin{CD}
	\O_{S_A} @>1>> \O_{S_A} @<1<< \tau^\ast \O_{S_A} \\
	@VVfV	@VV{f'}V @VV{\tau^\ast f}V \\
	\M @>\sigma>> \M' @<j<< \tau^\ast \M
\end{CD}
\]
Clearly the homomorphism is uniquely determined by $f \in \Gamma( S_A, \M ) $, and
an $ f \in \Gamma( S_A, \M ) $ extends to a homomorphism of shtukas if and only if
$ \partial f = 0 $. So we obtain an exact sequence
\[
	0 \longrightarrow \Hom( \1, \M )
	  \longrightarrow \Gamma( S_A, \M )
	  \overset{\partial}{\longrightarrow} \Gamma( S_A, \M' )
\]
and hence an isomorphism 
\[
	\Hom( \1, \M ) = \HH^0( S_A, \M^\bullet ).
\]
	  
Now any shtuka
\[
	\I = \left[  
	\I \overset{\sigma}{\longrightarrow} \I' \overset{j}{\longleftarrow} \tau^\ast \I 
	\right]
\]
with $ \I $ and $ \I' $ injective $\O_{S_A}$-modules is an injective object in the category of
shtukas on $ S $. So we can find an injective resolution $ \I^\bullet $ of
the shtuka $ \M $ such that the resulting double complex $ \I^{\bullet\bullet} $ is
an injective resolution of the complex $ \M^\bullet $. We obtain a canonical isomorphism
\[
	\Ext^i( \1, \M ) = \HH^i( S_A, \M^\bullet )
\]
for all $ i $.
\end{proof}

\bibliographystyle{plain}
\bibliography{../master}

\def\cprime{$'$} \def\cprime{$'$} \def\cprime{$'$} \def\cprime{$'$}
  \def\cprime{$'$}
\begin{thebibliography}{10}

\bibitem{Anderson86}
Greg~W. Anderson.
\newblock $t$-motives.
\newblock {\em Duke Math. J.}, 53(2):457--502, 1986.
\newblock \href{http://www.ams.org/mathscinet-getitem?mr=850546}{MR850546}.

\bibitem{Anderson90}
Greg~W. Anderson and Dinesh~S. Thakur.
\newblock Tensor powers of the {C}arlitz module and zeta values.
\newblock {\em Ann. of Math. (2)}, 132(1):159--191, 1990.
\newblock \href{http://www.ams.org/mathscinet-getitem?mr=1059938}{MR1059938}.

\bibitem{Drinfeld77E}
V.~G. Drinfel{\cprime}d.
\newblock Commutative subrings of certain noncommutative rings.
\newblock {\em Funkcional. Anal. i Prilo\v zen.}, 11(1):11--14, 96, 1977.
\newblock \href{http://www.ams.org/mathscinet-getitem?mr=0476732}{MR0476732}.

\bibitem{Lafforgue98}
L.~Lafforgue.
\newblock Chtoucas de {D}rinfeld et applications.
\newblock In {\em Proceedings of the {I}nternational {C}ongress of
  {M}athematicians, {V}ol. {II} ({B}erlin, 1998)}, number Extra Vol. II, pages
  563--570 (electronic), 1998.
\newblock \href{http://www.ams.org/mathscinet-getitem?mr=1648105}{MR1648105}.

\bibitem{Lafforgue09}
Vincent Lafforgue.
\newblock Valeurs sp\'eciales des fonctions {L} en caract\'eristique {$p$}.
\newblock {\em J. Number Theory}, 129(10):2600--2634, 2009.
\newblock \href{http://www.ams.org/mathscinet-getitem?mr=2541033}{MR2541033}.

\bibitem{Mazza06}
Carlo Mazza, Vladimir Voevodsky, and Charles Weibel.
\newblock {\em Lecture notes on motivic cohomology}, volume~2 of {\em Clay
  Mathematics Monographs}.
\newblock American Mathematical Society, Providence, RI, 2006.
\newblock \href{http://www.ams.org/mathscinet-getitem?mr=2242284}{MR2242284}.

\bibitem{Papanikolas03}
Matthew~A. Papanikolas and Niranjan Ramachandran.
\newblock A {W}eil-{B}arsotti formula for {D}rinfeld modules.
\newblock {\em J. Number Theory}, 98(2):407--431, 2003.
\newblock \href{http://www.ams.org/mathscinet-getitem?mr=1955425}{MR1955425}.

\bibitem{Poonen97}
Bjorn Poonen.
\newblock Local height functions and the {M}ordell-{W}eil theorem for
  {D}rinfel\cprime d modules.
\newblock {\em Compositio Math.}, 97(3):349--368, 1995.
\newblock \href{http://www.ams.org/mathscinet-getitem?mr= 1353279}{MR1353279}.

\bibitem{Taelman10}
Lenny Taelman.
\newblock A {D}irichlet unit theorem for {D}rinfeld modules.
\newblock {\em Math. Ann.}, 2010.

\bibitem{Taelman10b}
Lenny Taelman.
\newblock Special {$ L $}-values of {D}rinfeld modules.
\newblock {\em preprint}, 2010.

\bibitem{Woo95}
Sung~Sik Woo.
\newblock Extensions of {D}rinfel\cprime d modules of rank {$2$} by the
  {C}arlitz module.
\newblock {\em Bull. Korean Math. Soc.}, 32(2):251--257, 1995.
\newblock \href{http://www.ams.org/mathscinet-getitem?mr=1356079}{MR1356079}.

\end{thebibliography}

\end{document}